 \newtheorem{thm}{Theorem}[section]
 \newtheorem{lem}[thm]{Lemma}
 \newtheorem{prop}[thm]{Proposition}
 \theoremstyle{definition}
 \newtheorem{defn}[thm]{Definition}
 \theoremstyle{remark}
 \newtheorem{rem}[thm]{Remark}
 \newtheorem*{ex}{Example}
 \numberwithin{equation}{section}
\begin{document}

\title[Cyclic vectors in the ball]
 {A note on Dirichlet-type spaces and cyclic vectors in the unit ball of 
$\mathbb{C}^2$}
\author[Sola]{Alan Sola}

\address{
Department of Pure Mathematics and Mathematical Statistics\\University of Cambridge\\
Wilberforce Road, Cambridge CB3 0WB\\UK}

\email{a.sola@statslab.cam.ac.uk}


\subjclass{Primary: 32A36, Secondary: 31C25, 47A13}

\keywords{Dirichlet-type spaces, cyclic vectors, anisotropic capacities}

\begin{abstract}
We extend results of B\'en\'eteau, Condori, Liaw, Seco, and Sola 
concerning cyclic vectors in Dirichlet-type spaces to the setting of the 
unit ball, identifying some classes of cyclic and non-cyclic
functions, and noting the 
necessity of certain capacity conditions. 
\end{abstract}

\maketitle

\bibliographystyle{alpha}



\section{Introduction and preliminaries}
\subsection{Dirichlet-type spaces and cyclic vectors}
Let \[\mathbb{B}^2=\{(z_1,z_2)\in \mathbb{C}^2\colon |z_1|^2+|z_2|^2<1\}\]
denote the {\it unit ball} in $\mathbb{C}^2$, and let
\[\mathbb{S}^2=\partial \mathbb{B}^2=\{(\zeta_1,\zeta_2)\in 
\mathbb{C}^2\colon |\zeta_1|^2+|\zeta_2|^2=1\}\]
be its boundary, the {\it unit sphere}. In this note, we are concerned with certain Hilbert spaces of analytic 
functions defined on the ball. Let $\alpha\in (-\infty, \infty)$ be fixed. 
An analytic 
function $f\colon \mathbb{B}^2\to 
\mathbb{C}$ having power series expansion
\begin{equation}
f(z_1,z_2)=\sum_{k=0}^{\infty}\sum_{l=0}^{\infty}a_{k,l}z_1^kz_2^l
\label{powerexp}
\end{equation}
is said to belong to the {\it Dirichlet-type space} $\mathcal{D}_{\alpha}$ if 
\begin{equation}
\|f\|^2_{\alpha}=\sum_{k=0}^{\infty}\sum_{l=0}^{\infty}
(2+k+l)^{\alpha}\frac{k!l!}{(1+k+l)!}|a_{k,l}|^2<\infty.
\label{alphanormdef}
\end{equation}
General introductions to function theory in the ball can be found in \cite{RudBook, ZhuBook}.

\begin{rem}Since 
the ball is a connected Reinhardt domain containing the origin,
any analytic function in $\mathbb{B}^2$ has a 
power series expansion of the form \eqref{powerexp} that is valid in 
the unit ball. 
\end{rem}
The spaces we consider here represent one possible 
generalization to two variables of the one-variable {\it Dirichlet-type 
spaces} $D_{\alpha}$ (see \cite{EKMRBook}) consisting of analytic functions 
$f(z)=\sum_{k=0}^{\infty}a_kz^k$ in the unit disk having 
\[\|f\|^2_{D_{\alpha}}=\sum_{k=0}^{\infty}(k+1)^{\alpha}|a_k|^2<\infty.\]
The weights in the $\ell^2$ norm in \eqref{alphanormdef} are chosen in 
such a way that $\mathcal{D}_{0}$ and $\mathcal{D}_{-1}$ coincide 
with the usual {\it Hardy} and {\it Bergman spaces} of the ball, as 
in the one-variable setting. The norm in 
these spaces is usually defined via integrals over spheres and balls, but 
using \cite[Prop. 1.4.9]{RudBook} one readily verifies that 
\eqref{alphanormdef} furnishes an equivalent norm for these spaces.
The {\it Dirichlet space}, distinguished by its 
M\"obius invariance properties and discussed in \cite{ZhuBook}, corresponds 
to the parameter choice $\alpha=2$ (in the unit disk and unit bidisk, we get 
the Dirichlet space when $\alpha=1$). Some background material, including characterizations of $\mathcal{D}_{\alpha}$ involving radial 
derivatives, can be found in
\cite{AC89, ZhuBook,Li06, BB08, Mi11} and the references provided 
therein. 
When $\alpha>2$, the spaces $\mathcal{D}_{\alpha}$ are algebras; we will 
focus on the case $\alpha\leq 2$. It is apparent from the definition of the norm \eqref{alphanormdef} that 
$\mathcal{D}_{\alpha}\subset \mathcal{D}_{\beta}$ if $\alpha\geq \beta$, 
and that polynomials in two complex variables 
are dense in all $\mathcal{D}_{\alpha}$.

We say that $f\in \mathcal{D}_{\alpha}$ is a {\it cyclic vector} 
if the closed invariant subspace
\[[f]=\mathrm{clos}\, \, \mathrm{span}\{z_1^kz_2^lf\colon k=0,1,\ldots, l=0,1,\ldots\}\]
coincides with $\mathcal{D}_{\alpha}$. As usual, invariance refers to joint 
invariance under the bounded linear operators $\{S_1,S_2\}$ induced by multiplication by 
the coordinate functions: $S_j\colon f\mapsto z_j\cdot f$, $j=1,2$.

The basic example of a cyclic function is $f(z_1,z_2)=1$; its cyclicity is a consequence of the density of polynomials in $\mathcal{D}_{\alpha}$. An immediate consequence is that $f\in \mathcal{D}_{\alpha}$ is cyclic if and only if there exists a sequence $(p_n)_{n}$ of polynomials in two variables such that 
$\|p_nf-1\|_{\alpha}\to 0$ as $n \to \infty$.
At the other extreme, if 
a function $f\in \mathcal{D}_{\alpha}$ vanishes at some point $(z_1,z_2)\in \mathbb{B}^2$, then $f$ is 
not cyclic because $\mathcal{D}_{\alpha}$ enjoys the {\it bounded 
point evaluation property} (BPE), and hence 
elements of the invariant subspace generated by a given function inherit all 
its zeros in $\mathbb{B}^2$. If $\alpha>2$, then $f\in \mathcal{D}_{\alpha}$ is cyclic precisely 
when $f$ does not vanish on $\overline{\mathbb{B}^2}$.

The purpose of this note is to continue the investigations carried out in the 
setting of the bidisk in \cite{BCLSSII13, BKKLSS} 
by studying the spaces $\mathcal{D}_{\alpha}$. The ball $\mathbb{B}^2$
and the bidisk $\mathbb{D}^2=\{(z_2,z_2)\in \mathbb{C}^2\colon |z_1|<1, 
|z_2|<1\}$ are 
the most obvious two-dimensional analogs of the unit disk, 
but they are different in a number of ways as regards geometry and function theory.  For instance, $\partial \mathbb{D}^2$ contains an abundance of analytic disks, whereas the unit sphere $\mathbb{S}^2$ contains no (non-trivial) analytic disks at all. Another important difference between the ball and the bidisk is that the topological and Shilov boundaries of the ball coincide, while the Shilov boundary of the bidisk can be identified with the torus $\mathbb{T}^2$, which is much smaller than $\partial \mathbb{D}^2$.
Nevertheless, the general flavor of the results and observations in this note is the same as in the bidisk
paper \cite{BCLSSII13}, and the methods we use are essentially the same. For 
instance,
\begin{itemize}
\item we are able to observe quantitative differences in how fast 
$1/f$ can be approximated by polynomials for different classes of cyclic 
functions,
\item there exist polynomials in two complex variables that do not vanish in 
$\mathbb{B}^2$, but are not cyclic in $\mathcal{D}_{\alpha}$ for certain 
values of $\alpha$, and
\item for an appropriate notion of capacity,
$\mathrm{cap}_{\alpha}(\mathcal{Z}(f)\cap \mathbb{S}^2)>0$ implies that 
$f\in \mathcal{D}_{\alpha}\cap A(\mathbb{B}^2)$ is {\it not} cyclic.
\end{itemize}
\subsection{Preliminaries}
Throughout, we shall write 
$\langle z, w\rangle=z_1\bar{w}_1+z_2\bar{w}_2$ for the usual Euclidean inner 
product on $\mathbb{C}^2$. We will use $d\sigma(\zeta_1,\zeta_2)$ to indicate 
integration with respect to the (induced) normalized surface measure on the 
sphere.

The following is standard, see \cite[Prop. 1.4.9]{RudBook} or 
\cite[Lemma 1.11]{ZhuBook}.
\begin{lem}
The monomials $\{z_1^kz_2^l\}_{k,l\in \mathbb{N}}$ form an orthogonal set with respect to area measure in the ball and surface measure on the sphere, and
\[\int_{\mathbb{S}^2}|\zeta_1^k\zeta_2^l|^2d\sigma(\zeta_1,\zeta_2)=\frac{k!l!}{(1+k+l)!}.\]
\end{lem}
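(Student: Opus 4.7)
The plan is to prove the two claims separately, obtaining orthogonality from a symmetry argument and the norm identity from a Gaussian integral computation combined with the standard polar decomposition on $\mathbb{C}^2$.

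For orthogonality, I would exploit the invariance of the normalized surface measure $d\sigma$ on $\mathbb{S}^2$ under the torus action $(\zeta_1,\zeta_2)\mapsto (e^{i\theta_1}\zeta_1,e^{i\theta_2}\zeta_2)$, which preserves the sphere. Given indices $(k,l)\ne (k',l')$, applying this change of variables to
\[\int_{\mathbb{S}^2}\zeta_1^k\zeta_2^l\overline{\zeta_1^{k'}\zeta_2^{l'}}\,d\sigma(\zeta_1,\zeta_2)\]
introduces the factor $e^{i(k-k')\theta_1+i(l-l')\theta_2}$. Since the integral is independent of $(\theta_1,\theta_2)$, averaging over the torus $\mathbb{T}^2$ forces it to vanish.

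For the norm identity, my plan is to bypass any direct sphere parametrization and instead compute
\[I_{k,l}:=\int_{\mathbb{C}^2}|z_1|^{2k}|z_2|^{2l}e^{-|z_1|^2-|z_2|^2}\,dV(z)\]
in two ways. On one hand, Fubini and the one-variable identity $\int_{\mathbb{C}}|z|^{2k}e^{-|z|^2}dA(z)=\pi k!$ give $I_{k,l}=\pi^2 k!\,l!$. On the other hand, writing $(z_1,z_2)=r\zeta$ with $r\geq 0$ and $\zeta\in\mathbb{S}^2$, and using the fact that $dV=2\pi^2 r^3\,dr\,d\sigma(\zeta)$ (since $\sigma$ is normalized and the unit $3$-sphere has total area $2\pi^2$), together with the homogeneity $|z_1|^{2k}|z_2|^{2l}=r^{2k+2l}|\zeta_1|^{2k}|\zeta_2|^{2l}$, yields
\[I_{k,l}=2\pi^2\left(\int_0^\infty r^{2k+2l+3}e^{-r^2}\,dr\right)\int_{\mathbb{S}^2}|\zeta_1|^{2k}|\zeta_2|^{2l}\,d\sigma=\pi^2 (k+l+1)!\int_{\mathbb{S}^2}|\zeta_1|^{2k}|\zeta_2|^{2l}\,d\sigma,\]
after the substitution $s=r^2$ reduces the radial integral to $\tfrac{1}{2}(k+l+1)!$. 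Equating the two expressions for $I_{k,l}$ produces the desired formula.

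I do not anticipate a serious obstacle; the only point requiring care is the correct normalization constant in the polar decomposition, i.e.\ the total area $2\pi^2$ of $\mathbb{S}^2\subset\mathbb{R}^4$ and the Jacobian $r^3$ appropriate to real dimension four. An entirely equivalent alternative, if one prefers to avoid quoting the surface area, is to parametrize $\zeta_1=\cos\phi\, e^{i\theta_1}$, $\zeta_2=\sin\phi\, e^{i\theta_2}$ with $\phi\in[0,\pi/2]$ and $\theta_j\in[0,2\pi)$, in which case $d\sigma$ is a constant multiple of $\cos\phi\sin\phi\,d\phi\,d\theta_1\,d\theta_2$, and the integral reduces to a Beta function evaluation $B(k+1,l+1)=k!\,l!/(k+l+1)!$, with the constant fixed by the normalization $\int_{\mathbb{S}^2}d\sigma=1$.
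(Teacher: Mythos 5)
Your proof is correct: the paper gives no argument for this lemma, simply citing Rudin (Prop.\ 1.4.9) and Zhu (Lemma 1.11), and your computation of the Gaussian integral $I_{k,l}$ in two ways via the polar decomposition of $\mathbb{R}^4$ is precisely the standard argument found in those references, with all constants ($2\pi^2$ for the area of $\mathbb{S}^2\subset\mathbb{R}^4$, the Jacobian $r^3$, and the radial integral $\tfrac12(k+l+1)!$) handled correctly. The only trivial omission is that the lemma also asserts orthogonality with respect to area measure on the ball; the identical torus-invariance argument applies there, since Lebesgue measure on $\mathbb{B}^2$ is invariant under $(z_1,z_2)\mapsto(e^{i\theta_1}z_1,e^{i\theta_2}z_2)$.
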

The latter formula appears naturally as part of the $\ell^2$ weight in the 
norm definition \eqref{alphanormdef}. (In particular, monomials do not 
have norm $1$ in $H^2(\mathbb{B}^2)$.)

One way of building analytic functions in $\mathbb{B}^2$ is to take the {\it Cauchy transform}
of functions or measures on the sphere,
\begin{equation}
C[\mu](z_1,z_2)=\int_{\mathbb{S}^2}\frac{d\mu(\zeta_1,\zeta_2)}{(1-\langle z,\overline{\zeta}\rangle)^2}, 
\quad (z_1,z_2) \in \mathbb{B}^2.
\end{equation}
(Our definition of the Cauchy transform differs from the usual one in featuring
a complex conjugate; this will turn out to be convenient later on.)

If $f$ belongs to $A(\mathbb{B}^2)$, the {\it ball algebra}, and if we let 
$d\mu/d\sigma=f|_{\mathbb{S}^2}$, then the radial limits of $C[\mu]$ 
coincide with the values of $f$ on the sphere. The natural stronger notion of convergence in the ball, ie. the analog of 
non-tangential convergence, is given by the so-called 
{\it Kor\'anyi-} or {\it $K$-limit}, taken over approach regions of the form
\[\Gamma_{a}(\zeta)=\left\{(z_1,z_2)\in \mathbb{B}^2\colon |1-\langle z,\zeta\rangle|<\frac{a}{2}(1-|z|^2)\right\} \quad (\zeta \in \mathbb{S}^2, a>1).\]
See \cite[Chapter 5]{RudBook} for definitions and background material. 
In particular, it is known that any $f\in H^2(\mathbb{B}^2)$ has finite
$K$-limits, and hence finite radial limits, almost everywhere on 
$\mathbb{S}^2$. The elements of $\mathcal{D}_{\alpha}$, $\alpha>0$, 
then inherit this property. We shall return to exceptional sets 
towards the end of this note.

The spaces $\mathcal{D}_{\alpha}$ are Hilbert spaces, hence self-dual, but they 
also admit a version of the $(D_{\alpha})^*\cong D_{-\alpha}$ duality considered by 
Brown and Shields in \cite{BS84}. The dual pairing in question is 
given by 
\begin{equation}
\langle f,g \rangle=\sum_{k=0}^{\infty}\sum_{l=0}^{\infty}
\frac{k!l!}{(1+k+l)!}a_{k,l}b_{k,l}=\lim_{r\to 1}\int_{r\mathbb{S}^2}
f(\zeta_1,\zeta_2)g(\bar{\zeta_1},\bar{\zeta_2})d\sigma(\zeta_1,\zeta_2),
\label{dualpairing}
\end{equation}
for $f=\sum_{k,l}a_{k,l}z_1^kz_2^l\in \mathcal{D}_{\alpha}$ and 
$g=\sum_{k,l}b_{k,l}z_1^kz_2^l\in \mathcal{D}_{-\alpha}$. The 
Cauchy-Schwarz inequality guarantees that this is indeed well-defined. 

In what follows, we shall also need to consider spaces of analytic functions in one complex variable. In addition to the usual Dirichlet-type spaces $D_{\alpha}$ of the unit disk, we need the space $d_{\alpha}$
consisting of analytic functions $f\colon (1/\sqrt{2})\mathbb{D}=
\{z\in \mathbb{C}\colon |z|<1/\sqrt{2}\}\to \mathbb{C}$ having 
\[\|f\|_{d_{\alpha}}^2=\int_{0}^{1/\sqrt{2}}\int_{0}^{2\pi}|f'(re^{i\theta})|^2|(1-2r^2)^{1-\alpha}
rdrd\theta<\infty.\]
An equivalent norm for $d_{\alpha}$ can be given in terms of Taylor coefficients:
\[\|f\|^2_{d_{\alpha}}\asymp \sum_{k=0}^{\infty}2^{-k}(k+1)^{\alpha}|a_k|^2.\]
There is a natural identification between function theories of $D_{\alpha}$ and $d_{\alpha}$, and one verifies that, {\it mutatis mutandis}, 
the results in \cite{BCLSS13} are valid for $d_{\alpha}$.

Let $P_n$ denote the vector spaces of polynomials in $z$ having degree at most $n$, and let $\varphi_{\alpha}(n)=n^{1-\alpha}$ (where $n=1,2,\ldots$) for $\alpha \in [0,1)$, and take $\varphi_1(n)=\sum_{j=1}^nj^{-1}$. In \cite{BCLSS13}, the sharp estimate 
\begin{equation}
\mathrm{dist}_{D_{\alpha}}(1,f\cdot P_n)=\inf_{p\in P_n}\|p\cdot f-1\|_{D_{\alpha}}
\asymp \varphi_{\alpha}^{-1}(n+1) \quad (n\to \infty)
\label{sharpnorm}
\end{equation}
was obtained for $f\in D_{\alpha}$ that have
no zeros inside the unit disk, admit an analytic continuation to 
a strictly bigger disk, and have at least one zero on $\mathbb{T}$ (see \cite[Thm 3.7]{BCLSS13}). In what follows, we shall refer to this as condition NZAC.
Functions in $D_{\alpha}$ satisfying NZAC were previously known to be cyclic and 
the novelty in \cite{BCLSS13} was the determination of the optimal rate of 
decay for $\mathrm{dist}_{D_{\alpha}}(1, f\cdot P_n)$.
The reason for requiring $\mathcal{Z}(f)\cap \mathbb{T}\neq \emptyset$ is 
that any function that is non-vanishing on the {\it closed} unit disk is 
cyclic, and the associated quantity 
$\mathrm{dist}_{D_{\alpha}}(1,f\cdot P_n)$ exhibits exponential 
decay, or better; this can be seen by examining the truncated power series of $1/f$.
\section{One-variable functions and small zero sets}
One of the main differences between the ball and the bidisk is that the latter 
is a product domain, while the former is not, and the spaces $\mathcal{D}_{\alpha}$ are not tensor products of 
one-variable Dirichlet spaces: if $f=f(z_1)$ and $g=g(z_2)$, then, 
in general, $\|E_1(f)E_2(g)\|_{\alpha} \neq \|f\|_{D_{\alpha}}\cdot \|g\|_{D_{\alpha}}$. (Example: $f=z_1$, $g=z_2$.) Here, $E_j$ ($j=1,2$) are operators of extension:
given a one-variable function, set
\[E_j(f)(z_1,z_2)=f(z_j),\quad (z_1,z_2)\in \mathbb{B}^2.\]
We also note that the unit disk can be identified with the subset
\[\{(z_1,z_2)\in \mathbb{C}^2\colon |z_1|<1, z_2=0\}\subset \mathbb{B}^2,\]
and hence the operator of restriction, 
\[R_1(f)(z_1)=f(z_1,0), \quad z_1\in \mathbb{D},\]
is well-defined, and $R_1\circ E_1$ is the identity operator.

In what follows, $\mathcal{P}_n$ denotes the space of two-variable polynomials of degree at most $2n$. 
\begin{thm}\label{onevarthm}
Let $\alpha\leq 2$, and suppose $f\in D_{\alpha-1}$ satisfies condition NZAC. Then the function $E_1(f)$ is cyclic in $\mathcal{D}_{\alpha}$, and 
$\mathrm{dist}^2_{\mathcal{D}_{\alpha}}(1,f\cdot \mathcal{P}_n)\asymp \varphi^{-1}_{\alpha-1}(n+1)$.
\end{thm}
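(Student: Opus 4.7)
The plan is to reduce Theorem~\ref{onevarthm} to the one-variable sharp estimate \eqref{sharpnorm} via two elementary coefficient-level norm comparisons. Direct inspection of the weight in \eqref{alphanormdef} shows that for a one-variable $g(z)=\sum_{k}a_k z^k$ one has
\[
\|E_1(g)\|_{\alpha}^2 = \sum_{k}(2+k)^{\alpha}\frac{1}{k+1}|a_k|^2 \asymp \sum_{k}(k+1)^{\alpha-1}|a_k|^2 = \|g\|_{D_{\alpha-1}}^2,
\]
so $E_1$ is an isomorphism (with equivalent norms) between $D_{\alpha-1}$ and its image in $\mathcal{D}_{\alpha}$. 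Dropping all $l\geq 1$ terms from the norm of an arbitrary $h=\sum_{k,l}a_{k,l}z_1^k z_2^l\in\mathcal{D}_{\alpha}$ yields the one-sided inequality $\|R_1(h)\|_{D_{\alpha-1}}^2 \leq C\|h\|_{\alpha}^2$, so $R_1\colon \mathcal{D}_{\alpha}\to D_{\alpha-1}$ is bounded.

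For the upper bound I would lift the optimal one-variable approximant: let $p_n\in P_{2n}$ be a polynomial realizing $\|p_n f-1\|_{D_{\alpha-1}}^2 \asymp \varphi_{\alpha-1}^{-1}(2n+1) \asymp \varphi_{\alpha-1}^{-1}(n+1)$ from \eqref{sharpnorm}. Since $E_1(p_n)$ depends only on $z_1$ and has degree at most $2n$, it belongs to $\mathcal{P}_n$, and the $E_1$-comparison gives $\|E_1(p_n)E_1(f)-1\|_{\alpha}^2 \asymp \|p_n f-1\|_{D_{\alpha-1}}^2 \asymp \varphi_{\alpha-1}^{-1}(n+1)$. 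For the lower bound, take an arbitrary $q\in\mathcal{P}_n$: its restriction $q(\,\cdot\,,0)$ lies in $P_{2n}$, and $R_1(q\cdot E_1(f)-1)=q(\,\cdot\,,0)f-1$. Combining boundedness of $R_1$ with the lower-bound half of \eqref{sharpnorm} yields $\|q\cdot E_1(f)-1\|_{\alpha}^2 \geq c\|q(\,\cdot\,,0)f-1\|_{D_{\alpha-1}}^2 \geq c'\varphi_{\alpha-1}^{-1}(n+1)$. Taking the infimum over $q\in\mathcal{P}_n$ completes the two-sided estimate; cyclicity of $E_1(f)$ then follows automatically, because $\varphi_{\alpha-1}^{-1}(n+1)\to 0$ as $n\to\infty$ whenever $\alpha\leq 2$, so $1\in[E_1(f)]$, and density of polynomials in $\mathcal{D}_\alpha$ finishes the job.

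The whole argument is essentially bookkeeping with the $\ell^2$ weights, and the substantive content is inherited from Theorem 3.7 of \cite{BCLSS13}. The one step I would pause over is ensuring that \eqref{sharpnorm} remains valid throughout the range of the parameter $\alpha-1$ required here---in particular for negative index when $\alpha<1$, whereas the cited theorem is phrased for $\alpha\in[0,1]$. This is the only place where the plan could stumble, but the truncation-of-$1/f$ proof underlying \eqref{sharpnorm} should extend mutatis mutandis to that range, making the issue a routine verification rather than a genuine obstacle.
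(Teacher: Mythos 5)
Your proposal is correct and takes essentially the same route as the paper, which derives the theorem from the one-variable sharp estimate \eqref{sharpnorm} together with the norm comparison $\|E_1(f)\|_{\alpha}\asymp\|f\|_{D_{\alpha-1}}$ --- precisely your coefficient computation. The only details you supply beyond what the paper states explicitly are the boundedness of the restriction $R_1$ needed for the lower bound over arbitrary two-variable competitors, and the caveat about extending \eqref{sharpnorm} to negative index when $\alpha<1$; both are filled-in bookkeeping rather than a different argument.
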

If $f, g\in D_{\alpha-1}$ satisfy NZAC and 
both $E_1(f)$ and $E_2(g)$ are multipliers, then the product of 
$E_1(f)(z_1,z_2)=f(z_1)$ and $E_2(g)(z_1,z_2)=g(z_2)$
 is cyclic for all $\alpha\leq 2$. 
We note that a function that satisfies the hypotheses above is not cyclic in any $\mathcal{D}_{\alpha}$ with $\alpha>2$ since it vanishes in $\overline{\mathbb{B}^2}$. Theorem \ref{onevarthm} is a straight-forward consequence of \eqref{sharpnorm} and the following easy norm comparison (no doubt well-known to experts, see \cite{RudBook} for the Hardy/Bergman case).
\begin{prop}
If $f \in D_{\alpha-1}$, then $\|E_1(f)\|_{\alpha}\asymp \|f\|_{D_{\alpha-1}}$.
\end{prop}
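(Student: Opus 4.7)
The plan is to compute $\|E_1(f)\|_\alpha^2$ directly from the power-series definition and recognize the result as (equivalent to) $\|f\|_{D_{\alpha-1}}^2$. This is really just a matching of $\ell^2$ weights once one inserts the right coefficient sequence.

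First I would write the Taylor expansion of the one-variable function $f\in D_{\alpha-1}$ as $f(z)=\sum_{k=0}^{\infty}a_k z^k$, so that $E_1(f)(z_1,z_2)=f(z_1)=\sum_{k,l\geq 0}a_{k,l}z_1^k z_2^l$ with $a_{k,0}=a_k$ and $a_{k,l}=0$ for $l\geq 1$. Plugging this into \eqref{alphanormdef} collapses the double sum to a single sum in $k$:
\[
\|E_1(f)\|_\alpha^2=\sum_{k=0}^{\infty}(2+k)^{\alpha}\,\frac{k!\,0!}{(1+k)!}\,|a_k|^2=\sum_{k=0}^{\infty}\frac{(2+k)^{\alpha}}{k+1}\,|a_k|^2.
\]

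Next I would simply observe that $(2+k)^{\alpha}/(k+1)\asymp (k+1)^{\alpha-1}$ uniformly in $k\geq 0$ (the constants depend only on $\alpha$), which turns the last expression into a quantity comparable to $\sum_{k\geq 0}(k+1)^{\alpha-1}|a_k|^2=\|f\|_{D_{\alpha-1}}^2$. This yields the asserted two-sided estimate.

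There is no real obstacle here: the proof is essentially an arithmetical check, and the whole point is that the sphere-integral normalization $k!\,l!/(1+k+l)!$ appearing in \eqref{alphanormdef} contributes an extra factor of $1/(k+1)$ when $l=0$, which is exactly what shifts the weight parameter from $\alpha$ down to $\alpha-1$. Combined with \eqref{sharpnorm} applied to $f\in D_{\alpha-1}$, this proposition immediately gives both the cyclicity and the sharp rate asserted in Theorem \ref{onevarthm}.
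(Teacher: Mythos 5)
Your proof is correct and is essentially the paper's own argument: the paper computes $\|z_1^k\|_{\alpha}^2=(2+k)^{\alpha}k!/(1+k)!\asymp(k+1)^{\alpha-1}$ and invokes orthogonality of monomials, which is exactly your weight-matching computation written coefficientwise. No differences worth noting.
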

\begin{proof}
This is completely elementary: we have
\[\|z_1\|^2_{\alpha}=(2+k)^{\alpha}\frac{k!}{(1+k)!}\asymp (1+k)^{\alpha}\cdot\frac{1}{1+k}=(k+1)^{\alpha-1},\]
and the result follows by orthogonality of monomials.
\end{proof}
\begin{ex}
Consider the functions $f(z_1,z_2)=1-z_1$ and $g(z_1,z_2)=(1-z_1)(1-z_2)$. 
None of these functions vanishes in the ball, and we have
\[\mathcal{Z}(f)\cap \mathbb{S}^2=\{(1,0)\} \quad 
\textrm{and}\quad \mathcal{Z}(g)\cap \mathbb{S}^2=\{(1,0),(0,1)\},\]
so that both zero sets are discrete 
(unlike in the bidisk where $\mathcal{Z}(f)$ gives rise to an analytic disk in 
$\partial \mathbb{D}^2$). 
By Theorem \ref{onevarthm}, 
the functions $f$ and $g$ are cyclic in $\mathcal{D}_{\alpha}$ for 
$\alpha\leq 2$, 
but are not cyclic for $\alpha>2$.

Next, we examine the polynomial 
$f^{\sharp}(z_1,z_2)=1-(z_1+z_2)/\sqrt{2}$, which is irreducible and has
$\mathcal{Z}(f^{\sharp})\cap \mathbb{S}^2=\{(2^{-1/2},2^{-1/2})\}.$
This function is not a product of one-variable functions, but letting
\begin{equation}
U=\left(\begin{array}{cc}\frac{1}{\sqrt{2}} & \frac{1}{\sqrt{2}}\\ 
\frac{1}{\sqrt{2}} &-\frac{1}{\sqrt{2}}\end{array}\right),
\label{unitary}
\end{equation}
we note that $f^{\sharp}(z_1,z_2)=f(U(z_1,z_2)^T)=(f\circ U)(z_1,z_2)$.
Now if $p$ is a polynomial, then so is $p\circ U^{-1}$, and
\[\|p\cdot f^{\sharp}-1\|_{\alpha}=\|(p\circ U^{-1})\cdot f-1\|_{\alpha},\]
since the $\mathcal{D}_{\alpha}$ norm is invariant under unitary 
transformations. Thus the quantity $\mathrm{dist}_{\alpha}(1,f^{\sharp}\cdot \mathcal{P}_n)$
is of the same order as for $f$, and in particular $f^{\sharp}$ 
is cyclic in $\mathcal{D}_{\alpha}$ for $\alpha\leq 2$.
\end{ex}

The preceding example shows that $\mathrm{dist}^2_{\alpha}(1,f\cdot \mathcal{P}_n)\asymp \varphi^{-1}_{\alpha-1}(n+1)$ 
can occur also for functions of two variables whose zero sets are small, so 
that this particular rate of decay 
reflects the size of $\mathcal{Z}(f)$ rather than 
the algebraic property of being a polynomial in one variable only. 

We refer the reader to \cite{Hed89} for more results on functions in the ball algebra that vanish at a single point of $\mathbb{S}^2$. 
\section{Diagonal subspaces and zero curves}
We now identify further classes of cyclic functions.
We let $\mathcal{J}_{\alpha}$ denote the closed subspace of $\mathcal{D}_{\alpha}$ consisting of functions 
of the form
\[f(z_1,z_2)=\sum_{k=0}^{\infty}a_k(z_1z_2)^k.\]
Given $f\in \mathcal{J}_{\alpha}$, we can 
produce a one-variable function by applying the operator
$\mathcal{R}\colon \mathrm{Hol}(\mathbb{B}^2)\to \mathrm{Hol}((1/\sqrt{2})\mathbb{D})$ 
which takes $f$ to
\[\mathcal{R}(f)(z_1)=f(z_1/\sqrt{2}), \quad z_1\in (1/\sqrt{2})\mathbb{D}.\]
This is well-defined because
\[2|z_1z_2|=|z_1|^2+|z_2|^2-(|z_1|-|z_2|)^2\leq 1\]
for all $(z_1,z_2)\in \overline{\mathbb{B}^2}$.
Geometrically speaking, we are looking at a disk embedded in the ball--but not in a coordinate plane--and then restricting $f$ to that disk. Similarly, a function $f\in d_{\alpha}$ can be mapped into $\mathcal{D}_{\alpha+1/2}$ via
\[\mathcal{L}(f)(z_1,z_2)=f(\sqrt{2}z_1\cdot z_2), \quad (z_1,z_2)\in \mathbb{B}^2.\]
Composing $\mathcal{R}$ and $\mathcal{L}$ yields the respective identity 
operators on $d_{\alpha}$ and $\mathcal{J}_{\alpha}$.
\begin{thm}\label{diagspaces}
Suppose $f\in \mathcal{J}_{\alpha}$, and suppose $\mathcal{R}(f)$ satisfies 
condition NZAC. Then $f$
is cyclic in $\mathcal{D}_{\alpha}$ precisely when $\alpha\leq 3/2$, and 
$\mathrm{dist}^2_{\mathcal{D}_{\alpha}}(1,f\cdot \mathcal{P}_{n})\asymp \varphi^{-1}_{\alpha-1/2}(n+1)$.
\end{thm}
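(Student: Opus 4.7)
The plan is to use the operator $\mathcal{R}$ to transplant the two-variable cyclicity problem on the diagonal subspace $\mathcal{J}_\alpha\subset\mathcal{D}_\alpha$ to a one-variable problem in $d_{\alpha-1/2}$, and then invoke the analog of \eqref{sharpnorm} for $d_{\alpha-1/2}$ noted in the preliminaries. Throughout I write $g=\mathcal{R}(f)\in d_{\alpha-1/2}$; by hypothesis $g$ satisfies NZAC.

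The first ingredient is a norm comparison identifying $\mathcal{R}$ as a bi-continuous isomorphism $\mathcal{J}_\alpha\to d_{\alpha-1/2}$. For $F=\sum_k a_k(z_1z_2)^k\in\mathcal{J}_\alpha$, Stirling gives $(k!)^2/(1+2k)!\asymp 4^{-k}(k+1)^{-1/2}$, so
\[
\|F\|_\alpha^2\asymp\sum_{k\geq 0}(k+1)^{\alpha-1/2}\,4^{-k}\,|a_k|^2,
\]
which is, up to a bounded factor, the coefficient expression for $\|\mathcal{R}(F)\|_{d_{\alpha-1/2}}^2$ applied to $\mathcal{R}(F)(z)=\sum_k a_k2^{-k/2}z^k$. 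The second ingredient is an orthogonality reduction. Any $p\in\mathcal{P}_n$ splits as $p=\tilde p+p^\perp$, where $\tilde p=\sum_{j=0}^n c_j(z_1z_2)^j$ is the diagonal part; since $f$ has only diagonal monomials, a coefficient check shows $fp^\perp$ has no $z_1^kz_2^k$ term and is therefore $\mathcal{D}_\alpha$-orthogonal to $\mathcal{J}_\alpha$. Because $f\tilde p-1\in\mathcal{J}_\alpha$, Pythagoras yields
\[
\|fp-1\|_\alpha^2=\|f\tilde p-1\|_\alpha^2+\|fp^\perp\|_\alpha^2,
\]
so the best approximant can be taken diagonal. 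Using further that $\mathcal{R}$ is multiplicative on $\mathcal{J}_\alpha$ and sends diagonal polynomials of degree at most $2n$ bijectively onto $P_n$, combining the two ingredients gives
\[
\mathrm{dist}^2_{\mathcal{D}_\alpha}(1,f\cdot\mathcal{P}_n)\asymp\mathrm{dist}^2_{d_{\alpha-1/2}}(1,g\cdot P_n).
\]

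For $\alpha\leq 3/2$, i.e.\ $\alpha-1/2\leq 1$, the transplanted \eqref{sharpnorm} bounds the right-hand side by $\asymp\varphi_{\alpha-1/2}^{-1}(n+1)$, which tends to zero and simultaneously establishes the claimed rate and the cyclicity of $f$. For $\alpha>3/2$, i.e.\ $\alpha-1/2>1$, every element of $d_{\alpha-1/2}$ extends continuously to $\overline{(1/\sqrt{2})\mathbb{D}}$ with continuous boundary point evaluation; applied at the boundary zero $z_0$ of $g$ supplied by NZAC, this gives $\|gq-1\|_{d_{\alpha-1/2}}\geq c|g(z_0)q(z_0)-1|=c$ uniformly in polynomials $q$, and the orthogonal decomposition transfers this lower bound to show $\mathrm{dist}_{\mathcal{D}_\alpha}(1,f\cdot\mathcal{P}_n)\not\to 0$, ruling out cyclicity. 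The only non-routine point is the Stirling identification $\mathcal{J}_\alpha\cong d_{\alpha-1/2}$; once it and the orthogonal splitting are in place, the conclusion is a formal appeal to the one-variable sharp estimate.
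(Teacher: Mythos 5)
Your proof is correct and follows essentially the same route as the paper: the key norm comparison $\|F\|_\alpha \asymp \|\mathcal{R}(F)\|_{d_{\alpha-1/2}}$ via the Stirling estimate $(k!)^2/(1+2k)! \asymp 4^{-k}(k+1)^{-1/2}$ is exactly the paper's Lemma, and the rest is the transplanted one-variable sharp estimate. The only difference is that you spell out the orthogonal splitting $p=\tilde p+p^{\perp}$ and the bounded-point-evaluation argument for $\alpha>3/2$, steps the paper delegates to its citations of the bidisk paper; this is a welcome addition rather than a deviation.
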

Theorem \ref{diagspaces} follows from the results in \cite{BCLSS13}, adapted to $d_{\alpha-1/2}$,
and norm comparisons between two-variable and one-variable spaces; 
the high-level arguments are described in \cite{BCLSSII13}. The 
crucial point is the following result, which is a counterpart to 
\cite[Lemma 3.3]{BCLSSII13}.
\begin{lem}
If $f\in \mathcal{J}_{\alpha}$, then $\|f\|_{\alpha}\asymp \|\mathcal{R}(f)\|_{d_{\alpha-1/2}}$.
\end{lem}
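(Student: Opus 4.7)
The plan is a direct coefficient computation using Stirling's formula on both sides of the claimed asymptotic. Since $f \in \mathcal{J}_{\alpha}$ takes the form $f(z_1,z_2) = \sum_{k=0}^{\infty} a_k (z_1 z_2)^k$, only diagonal monomials $z_1^k z_2^k$ contribute to the series expansion \eqref{powerexp}, so the definition \eqref{alphanormdef} immediately gives
\[
\|f\|_{\alpha}^2 = \sum_{k=0}^{\infty} (2+2k)^{\alpha}\,\frac{(k!)^2}{(1+2k)!}\,|a_k|^2.
\]
The first step is then to estimate the combinatorial factor. Stirling's formula yields $\binom{2k}{k} \asymp 4^k/\sqrt{k+1}$, so that
\[
\frac{(k!)^2}{(1+2k)!} = \frac{1}{(2k+1)\binom{2k}{k}} \asymp (k+1)^{-1/2}\,4^{-k},
\]
and combining this with $(2+2k)^{\alpha} \asymp (k+1)^{\alpha}$ gives
\[
\|f\|_{\alpha}^2 \asymp \sum_{k=0}^{\infty} (k+1)^{\alpha - 1/2}\,4^{-k}\,|a_k|^2.
\]

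Next, I compute $\mathcal{R}(f)$ explicitly. Under the identification $f(z_1,z_2) = \tilde f(z_1 z_2)$ with $\tilde f(w) = \sum_k a_k w^k$, the definition gives $\mathcal{R}(f)(z) = \tilde f(z/\sqrt{2}) = \sum_{k=0}^{\infty} 2^{-k/2}a_k\,z^k$, so the Taylor coefficients of $\mathcal{R}(f)$ are $b_k = 2^{-k/2} a_k$. Applying the equivalent Taylor coefficient norm for $d_{\alpha - 1/2}$ provided in the preliminaries,
\[
\|\mathcal{R}(f)\|_{d_{\alpha - 1/2}}^2 \asymp \sum_{k=0}^{\infty} 2^{-k}(k+1)^{\alpha - 1/2}\,|b_k|^2 = \sum_{k=0}^{\infty} 4^{-k}(k+1)^{\alpha - 1/2}\,|a_k|^2,
\]
which matches the expression obtained above for $\|f\|_{\alpha}^2$, term by term, up to absolute constants. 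This proves the equivalence.

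There is no real obstacle — the proof is essentially a one-line Stirling estimate together with the observation that the scaling by $1/\sqrt{2}$ in the definition of $\mathcal{R}$ is precisely calibrated so that the geometric factor $2^{-k}$ from $\mathcal{R}(f)$'s coefficients combined with the geometric factor $2^{-k}$ in the equivalent norm for $d_{\alpha - 1/2}$ reproduces the $4^{-k}$ coming from $\binom{2k}{k}^{-1}$ in the $\mathcal{D}_{\alpha}$ norm. The mild subtlety to keep in mind is that one must work with $(k+1)$ (rather than $k$) throughout, to absorb the $k=0$ term into a uniform comparison constant; and that the exponent shift $\alpha \mapsto \alpha - 1/2$ arises entirely from the $(k+1)^{-1/2}$ factor produced by Stirling applied to the central binomial coefficient.
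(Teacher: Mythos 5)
Your proof is correct and follows essentially the same route as the paper's: expand the norm over the diagonal monomials, apply Stirling to $(k!)^2/(1+2k)!=\bigl((2k+1)\binom{2k}{k}\bigr)^{-1}\asymp 4^{-k}(k+1)^{-1/2}$, and match the result with the Taylor-coefficient form of the $d_{\alpha-1/2}$ norm applied to $\mathcal{R}(f)(z)=\sum_k 2^{-k/2}a_kz^k$. Your closing remarks on the calibration of the $\sqrt{2}$ scaling and the origin of the $1/2$ shift are accurate glosses on the same computation.
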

\begin{proof}
If $f\in \mathcal{J}_{\alpha}$, then
\begin{equation*}
\|f\|^2_{\alpha}=\sum_{k=0}^{\infty}(2+2k)^{\alpha}\frac{(k!)^2}{(1+2k)!}|a_{k}|^2
\asymp \sum_{k=0}^{\infty}(k+1)^{\alpha}\left[\frac{(k!)^2}{(1+2k)!}\right]|a_{k}|^2.
\end{equation*}
Appealing to standard asymptotic expansions of factorials, we find that
\begin{equation}
\frac{(k!)^2}{(1+2k)!}=\left(\begin{array}{c} 2k\\k \end{array}\right)^{-1}\frac{1}{1+2k}\asymp 4^{-k}k^{-1/2},
\label{factest}
\end{equation}
so that 
$\|f\|^2_{\alpha}\asymp \sum_{k=0}^{\infty}4^{-k}(k+1)^{\alpha-1/2}|a_k|^2$.
The latter expression coincides with (a multiple of) the norm in $d_{\alpha-1/2}$ of 
$g(z)=\sum_{k=0}^{\infty}2^{-k/2}a_kz^k$, which can be identified with the 
restriction $\mathcal{R}(f)$. 
\end{proof}
\begin{ex}
 Consider the polynomial $f(z_1,z_2)=1-2z_1z_2$.
We check that
\[\mathcal{Z}(f)\cap \overline{\mathbb{B}^2}=\mathcal{Z}(f)\cap \mathbb{S}^2=
\left\{\frac{1}{\sqrt{2}}\left(e^{i\theta},e^{-i\theta}\right)\colon \theta\in [0,2\pi)\right\},\]
which can be viewed as a curve contained in the unit sphere. 
By Theorem \ref{diagspaces}, $f$ is cyclic in $\mathcal{D}_{\alpha}$ for $\alpha\leq 3/2$, and not cyclic for $\alpha>3/2$. This is analogous to the situation in the bidisk, where $1-z_1z_2$ fails to be cyclic in $\mathfrak{D}_{\alpha}$, $\alpha>1/2$. 

Armed with this result, we deduce 
cyclicity, for the same range of $\alpha$, also for 
$f^{\flat}(z_1,z_2)=1-z_1^2+z_2^2$ 
since $f^{\flat}=f\circ U$ for the unitary \eqref{unitary}. In this case $f^{\flat}\notin \mathcal{J}_{\alpha}$, but the zero set is again a curve
$\mathcal{Z}(f^{\flat}\cap\mathbb{S}^2)=\{(\sin \theta, i\cos\theta)\colon \theta \in [0,2\pi))\}$, and the cyclicity properties of $f^{\flat}$ reflect this.
\end{ex}
\begin{rem}
The situation in the ball thus mirrors the bidisk case: 
when the spaces are algebras ($\alpha>1$ in the bidisk, $\alpha>2$ in 
the ball) functions have to be non-vanishing 
in the closure of the domain in order to be cyclic, but there is 
a regime ($\alpha>1/2$ in the bidisk, and $\alpha>3/2$ 
in the ball) where some polynomials that vanish on the Shilov boundary are 
cyclic, while others are not. 
In view of our examples, it is natural to ask for a characterization 
of cyclic polynomials analogous to that achieved in \cite{BKKLSS}. This 
looks like a harder problem 
in the ball because of the absence of determinantal representations.\end{rem}

\section{Cauchy transforms and capacity of zero sets}
One way of identifying non-cyclic vectors in Dirichlet-type spaces
is to employ the following scheme due to Brown and Shields. 
Put a measure on $\mathcal{Z}(f^*)$, the zero set in the sphere of the radial limits of a given $f\in \mathcal{D}_{\alpha}$, and consider its Cauchy 
transform. By examining the integral version of the dual pairing 
discussed earlier, and arguing as in \cite{BS84}, 
we find that the Cauchy transform under consideration 
annihilates all polynomial multiples of $f$. Hence, by the Hahn-Banach theorem, we can conclude
that $[f]\neq \mathcal{D}_{\alpha}$---provided we know that the Cauchy transform 
belongs to $\mathcal{D}_{-\alpha}$ (regarded here as the dual  
of $\mathcal{D}_{\alpha}$). 
\begin{ex}
Here is a trivial example. Consider the function $f(z_1,z_2)=1-z_1$, which is cyclic for $\alpha\leq 2$. The natural Cauchy transform 
associated with its zero set is $C[\delta_{(1,0)}](z_1,z_2)=(1-z_1)^{-2}=\sum_{k=0}^{\infty}(k+1)z_1^k$,
which is in $\mathcal{D}_{-\alpha}$ for every $\alpha>2$, and since $\langle f, C[\delta_{(1,0)}]\rangle=0$ by direct computation,
this reproves that $f$ is not cyclic when $\alpha>2$.

A more interesting example is the Cauchy transform of the integration current associated with $\mathcal{Z}(1-2z_1z_2)$: in that case
\[C[\mu_{\mathcal{Z}}](z_
1,z_2)=\frac{1}{2\pi}\int_{0}^{2\pi}\frac{d\theta}{(1-(z_1e^{i\theta}-z_2e^{-i\theta})/\sqrt{2})^2}.\]
We expand the integrand and obtain
\[C[\mu_{\mathcal{Z}}](z_1,z_2)=\sum_{k=0}^{\infty}
2^{-k/2}(k+1)\sum_{j=0}^k\left(\begin{array}{c}k\\j\end{array}\right)\left(\int_{\mathbb{T}}e^{i(2j-k)\theta}\frac{d\theta}{2\pi}\right)z_1^jz_2^{k-j}.\]
The integral on the right-hand side is equal to zero unless $2j=k$, in which 
case its value is equal to $1$. Hence
\[C[\mu_{\mathcal{Z}}](z_1,z_2)=\sum_{k=0}^{\infty}\frac{(1+2k)!}{(k!)^2}\left(\frac{z_1z_2}{2}\right)^k=\frac{1}{(1-2z_1z_2)^{3/2}}.\]
Using the asymptotic expansion of the ratio of factorials in \eqref{factest}, we find that 
\[\|C[\mu_{\mathcal{Z}}]\|^2_{-\alpha}\asymp \sum_{k=0}^{\infty}(k+1)^{-\alpha+1/2},\]
We have convergence, and hence non-cyclicity of $f$, provided $\alpha>3/2$. 
\end{ex}
For a Borel measure $\mu$ on $\mathbb{S}^2$ 
and $m,n\in \mathbb{N}$,
\[\mu^*(m,n)=\int_{\mathbb{S}^2}\zeta_1^m\zeta_2^nd\mu(\zeta_1,\zeta_2)
\quad \textrm{and}\quad 
\bar{\mu}^*(m,n)=\int_{\mathbb{S}^2}\overline{\zeta_1}^m\overline{\zeta_2}^n
d\mu(\zeta_1,\zeta_2).\]
If $\mu$ is a real measure, then $\bar{\mu}^*(m,n)=\overline{\mu^*(m,n)}$.
\begin{lem}\label{cauchylemma}
If $\mu$ is a Borel measure on $\mathbb{S}^2$, then 
\[\|C[\mu]\|^2_{-\alpha}\asymp \sum_{k=0}^{\infty}\sum_{j=0}^k(k+1)^{1-\alpha}
\left(\begin{array}{c}k\\j\end{array}\right)|\mu^*(j,k-j)|^2.\]
\end{lem}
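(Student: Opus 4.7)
The plan is to identify the Taylor expansion of $C[\mu]$ explicitly, substitute into the $\mathcal{D}_{-\alpha}$ norm, and simplify.

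First I would expand the Cauchy kernel as a power series in the $z$-variable. Using $(1-w)^{-2}=\sum_{k=0}^{\infty}(k+1)w^k$ with $w=\langle z,\bar{\zeta}\rangle=z_1\zeta_1+z_2\zeta_2$, and then applying the binomial theorem, one obtains
\[
\frac{1}{(1-\langle z,\bar{\zeta}\rangle)^2}=\sum_{k=0}^{\infty}(k+1)\sum_{j=0}^{k}\binom{k}{j}\zeta_1^j\zeta_2^{k-j}z_1^jz_2^{k-j}.
\]
For $(z_1,z_2)$ in any compact subset of $\mathbb{B}^2$ this series converges absolutely and uniformly in $\zeta\in\mathbb{S}^2$, so I can integrate term by term against $d\mu(\zeta)$ to read off the Taylor coefficients of $C[\mu]$:
\[
a_{j,k-j}=(k+1)\binom{k}{j}\mu^*(j,k-j).
\]

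Next I would substitute these coefficients into the defining $\mathcal{D}_{-\alpha}$ norm \eqref{alphanormdef}. Grouping the double sum by total degree $k=j+(k-j)$, the summand becomes
\[
(2+k)^{-\alpha}\,\frac{j!(k-j)!}{(1+k)!}\,(k+1)^2\binom{k}{j}^2|\mu^*(j,k-j)|^2.
\]
The key simplification is the identity $\dfrac{j!(k-j)!}{(1+k)!}=\dfrac{1}{(k+1)\binom{k}{j}}$, which collapses the weights above to $(2+k)^{-\alpha}(k+1)\binom{k}{j}$. Since $(2+k)^{-\alpha}(k+1)\asymp(k+1)^{1-\alpha}$, the claimed equivalence follows.

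There is no substantial obstacle here; the computation is entirely formal once the expansion of the kernel is carried out. The only point worth flagging is that the stated equivalence is meant in the sense that $C[\mu]\in\mathcal{D}_{-\alpha}$ if and only if the right-hand sum converges, with comparable values — in particular the lemma does not a priori assert $C[\mu]\in\mathcal{D}_{-\alpha}$, but rather expresses the $\mathcal{D}_{-\alpha}$ norm as a quadratic form in the moments $\mu^*(j,k-j)$, to be exploited when constructing measures on zero sets in the sequel.
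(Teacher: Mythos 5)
Your proposal is correct and follows the paper's proof essentially verbatim: expand the Cauchy kernel, read off the Taylor coefficients $(k+1)\binom{k}{j}\mu^*(j,k-j)$, substitute into \eqref{alphanormdef}, and simplify via $\frac{j!(k-j)!}{(1+k)!}=\bigl((k+1)\binom{k}{j}\bigr)^{-1}$. Your remarks on term-by-term integration and on reading the statement as an equivalence of quadratic forms are sensible additions but do not change the argument.
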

\begin{proof}
The Cauchy kernel has expansion 
$(1-\langle z, \bar{\zeta}\rangle)^{-2}=\sum_{k=0}^{\infty}(k+1)
\langle z, \zeta \rangle^k$, and expanding the powers of the 
inner product, 
we obtain that
\[C[\mu](z_1,z_2)=\sum_{k=0}^{\infty}
\sum_{j=0}^k(k+1)\left(\begin{array}{c}k\\j\end{array}\right)\bar{\mu}^*(j,k-j)
z_1^jz_2^{k-j}.\]
Computing the $\mathcal{D}_{-\alpha}$ norm, we find that 
\begin{multline*}
\|C[\mu]\|_{-\alpha}\\=
\sum_{k=0}^{\infty}\sum_{j=0}^k\frac{j!(k-j)!}{(1+j+k-j)!}
(2+j+k-j)^{-\alpha}
\left(\begin{array}{c}k\\j\end{array}\right)^2(k+1)^2|\mu^*(j,k-j)|^2\\
\asymp \sum_{k=0}^{\infty}\sum_{j=0}^k(k+1)^{2-\alpha}
\left(\begin{array}{c}k\\j\end{array}\right) \frac{k!}{j!(k-j)!}
\frac{j!(k-j)!}{(1+k)!}|\mu^*(j,k-j)|^2\\
\asymp \sum_{k=0}^{\infty}\sum_{j=0}^k(k+1)^{1-\alpha}\left(\begin{array}{c}k\\j\end{array}\right)|\mu^*(j,k-j)|^2.
\end{multline*}
\end{proof}

In the disk and bidisk, capacitary conditions 
provide us with a way of checking whether 
Cauchy transforms of measures supported on the zero set of a function are
in the right dual spaces. In the case of the 
bidisk, the appropriate capacities live on the distinguished boundary, and 
are of product type. There is an analogous notion of capacity for the ball, 
adapted to (the square of) the natural 
distance function of the sphere,
\[d(\zeta,\eta)=|1-\langle \zeta, \eta\rangle|^{1/2}, \quad 
\zeta=(\zeta_1,\zeta_2),\, \eta=(\eta_1,\eta_2)\in \mathbb{S}^2.\]
Applying a positive kernel to a distance function is of course a standard construction in potential theory (see eg. \cite{EKMRBook}).
\begin{defn}[Riesz $\alpha$-capacity for the sphere.]
Let $\mu$ be a Borel probability measure supported on some 
Borel set $E\subset \mathbb{S}^2$. Set 
\[h_{\alpha}(t)=\left\{\begin{array}{cc}
t^{\alpha-2},& \alpha \in (0,2),\\ 
\log(e/t), &\alpha=2 \end{array}\right. .
\]
The {\it 
Riesz $\alpha$-energy} of $\mu$ is given by
\[I_{\alpha}[\mu]=\int_{\mathbb{S}^2}\int_{\mathbb{S}^2}
h_{\alpha}(|1-\langle \zeta, \eta\rangle|)d\mu(\zeta)d\mu(\eta).\]
The {\it Riesz $\alpha$-capacity} of a Borel set $E$ is 
$\mathrm{cap}_{\alpha}(E)=\inf \{I_{\alpha}[\mu]\colon \mu \in 
\mathcal{P}(E)\}^{-1}$.
The set $E$ is said to have $\alpha$-capacity $0$ if there does not exist any probability measure, 
with support in $E$, that has finite $\alpha$-energy.
\end{defn}
These anisotropic capacities,
as well as closely related variants, appear in works of Ahern and Cohn, and 
others, on 
function theory on the ball, see eg. \cite{AC89, PR96}; the 
parametrizations are sometimes slightly different. In particular, 
one can show (see \cite{AC89, CV95}) that any $f\in \mathcal{D}_{\alpha}$ 
has finite radial limits (and even $K$-limits) $f^*$ on $\mathbb{S}^2$, except
possibly on a set $E_f$ having $\mathrm{cap}_{\alpha}(E_f)=0$. 

\begin{lem}\label{energylemma}
Suppose $\mu$ is a Borel probability measure on $\mathbb{S}^2$. Then
\begin{equation}
I_2[\mu]=1+\sum_{k=1}^{\infty}\frac{1}{k}\sum_{j=0}^k
\left(\begin{array}{c}k\\j \end{array}\right)|\mu^*(j,k-j)|^2
\label{energyformula}
\end{equation}
\end{lem}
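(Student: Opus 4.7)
The plan is to split $\log(e/t)=1+\log(1/t)$, expand the logarithmic piece as a power series, use the binomial theorem to separate the variables $\zeta$ and $\eta$, and identify the resulting integrals as the moments $\mu^*(j,k-j)$. Since $\mu$ is a probability measure, integration of the constant $1$ contributes the leading term on the right-hand side of \eqref{energyformula}, so the task reduces to showing
\[-\int_{\mathbb{S}^2}\!\!\int_{\mathbb{S}^2}\log|1-\langle\zeta,\eta\rangle|^2\,d\mu(\zeta)\,d\mu(\eta)=2\sum_{k=1}^{\infty}\frac{1}{k}\sum_{j=0}^{k}\binom{k}{j}|\mu^*(j,k-j)|^2.\]

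For the algebraic core, I would start from the absolutely convergent expansion $-\log|1-w|^2=-\log(1-w)-\log(1-\bar w)=\sum_{k\ge 1}(w^k+\bar w^k)/k$, valid for $|w|<1$, combined with the binomial identity
\[\langle\zeta,\eta\rangle^k=(\zeta_1\bar\eta_1+\zeta_2\bar\eta_2)^k=\sum_{j=0}^{k}\binom{k}{j}\zeta_1^j\zeta_2^{k-j}\bar\eta_1^j\bar\eta_2^{k-j}.\]
An application of Fubini then produces, for each $k\ge 1$,
\[\int\!\!\int\langle\zeta,\eta\rangle^k\,d\mu(\zeta)\,d\mu(\eta)=\sum_{j=0}^{k}\binom{k}{j}\mu^*(j,k-j)\,\overline{\mu^*(j,k-j)}=\sum_{j=0}^{k}\binom{k}{j}|\mu^*(j,k-j)|^2,\]
a real and non-negative quantity; the conjugate summand $\int\!\!\int\overline{\langle\zeta,\eta\rangle}^k\,d\mu\,d\mu$ contributes the same value.

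The main obstacle is legitimizing the interchange of the outer sum with the double integral, since $|\langle\zeta,\eta\rangle|$ may reach $1$ on the support of $\mu\otimes\mu$. I would handle this by an Abel-summation argument: for each $r\in(0,1)$, absolute convergence gives
\[-\int\!\!\int\log|1-r\langle\zeta,\eta\rangle|^2\,d\mu\,d\mu=2\sum_{k=1}^{\infty}\frac{r^k}{k}\sum_{j=0}^{k}\binom{k}{j}|\mu^*(j,k-j)|^2,\]
and as $r\to 1^-$ the right-hand side tends monotonically to the target sum $S$. To transfer the limit under the integral on the left, I would establish the uniform domination
\[-\log|1-rw|^2\le -\log|1-w|^2+2\log 2,\qquad |w|\le 1,\ r\in[0,1],\]
by splitting into the regimes $|1-w|>2(1-r)$ (where $|1-rw|\ge |1-w|/2$) and $|1-w|\le 2(1-r)$ (where both sides are controlled by $-2\log(1-r)$); paired with the trivial lower bound $-\log|1-rw|^2\ge -2\log 2$, this yields a $d\mu\otimes d\mu$-integrable majorant whenever $I_2[\mu]<\infty$, and dominated convergence closes the argument. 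The case $I_2[\mu]=\infty$ is handled by Fatou's lemma applied to $-\log|1-r\langle\zeta,\eta\rangle|^2+2\log 2\ge 0$, which forces $S=\infty$ as well. Everything else is a direct unwinding of definitions.
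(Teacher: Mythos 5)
Your proposal is correct and follows essentially the same route as the paper: expand $\log(e/t)$ into the constant plus the logarithmic series, regularize with a factor $r\in(0,1)$, use the binomial theorem and Fubini to identify the moments $\mu^*(j,k-j)$, and pass to the limit $r\to 1^-$ using the non-negativity of the coefficients. The only difference is that the paper outsources the limiting step to \cite[Thm.~2.4.4]{EKMRBook}, whereas you supply an explicit (and correct) domination $|1-rw|\geq|1-w|/2$ together with dominated convergence and Fatou; this is a welcome filling-in of detail rather than a different argument.
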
 
\begin{proof}
Noting that $\langle \zeta, \eta\rangle \in \overline{\mathbb{D}}$ when $\zeta, \eta \in \mathbb{S}^2$, we expand $h_{2}(|1-\langle \zeta, \eta\rangle|)$ as 
\[\log \frac{e}{|1-\langle \zeta, \eta\rangle|}=
1+\mathrm{Re}\left(\log \frac{1}{1-\langle \zeta, \eta\rangle}\right)=
1+\mathrm{Re}\sum_{k=1}^{\infty}\frac{1}{k}\langle \zeta, \eta\rangle^k.\]
Thus, for $0<r<1$, we can use the positivity of $\mu$ to write
\[\int_{\mathbb{S}^2}\int_{\mathbb{S}^2}\log\frac{e}{|1-r\langle\zeta, \eta\rangle|}
d\mu(\zeta)d\mu(\eta)=1+
\mathrm{Re}\sum_{k=1}^{\infty}\frac{r^k}{k}\int_{\mathbb{S}^2}\int_{\mathbb{S}^2}\langle
\zeta, \eta\rangle^kd\mu(\zeta)d\mu(\eta),\]
and the powers of $\langle \zeta,\eta\rangle$ can further be expanded using the binomial 
formula. 

Using the fact that we have $\mu^*(m,n)\bar{\mu}^*(m,n)=|\mu^*(m,n)|^2$,
the proof now proceeds along the lines of \cite[Thm. 2.4.4]{EKMRBook}, 
with Fourier coefficients of a measure replaced by $\mu^*(m,n)$.
\end{proof}

Combining Lemmas \ref{cauchylemma} and \ref{energylemma}, we arrive at 
a necessary condition for cyclicity in the Dirichlet space
(see \cite{BS84, BCLSSII13} for the disk and bidisk versions).
\begin{thm}\label{BSthm}
Suppose $f\in \mathcal{D}$ has 
$\mathrm{cap}_{2}(\mathcal{Z}(f^*))>0$. 
Then $f$ is {\it not} cyclic in the Dirichlet space $\mathcal{D}$.
\end{thm}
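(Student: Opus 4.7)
The plan is to follow the Brown--Shields duality scheme sketched before the theorem: from a measure supported on $\mathcal{Z}(f^*)$ with finite Riesz energy, build an element of $\mathcal{D}_{-2}$ that is nonzero but annihilates every polynomial multiple of $f$, and conclude via Hahn--Banach that $[f] \neq \mathcal{D}$.

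By hypothesis, pick a Borel probability measure $\mu$ supported on $\mathcal{Z}(f^*)$ with $I_2[\mu] < \infty$, and set $g := C[\mu]$. The coefficient expansions furnished by Lemma~\ref{cauchylemma} and Lemma~\ref{energylemma} match up term by term (after noting $(k+1)^{-1} \asymp k^{-1}$ for $k \geq 1$), giving
\[
\|g\|_{-2}^{2} \asymp I_2[\mu] < \infty,
\]
so $g \in \mathcal{D}_{-2}$, which defines a bounded linear functional $h \mapsto \langle h, g\rangle$ on $\mathcal{D}$ via the pairing~\eqref{dualpairing}. This functional is not identically zero: evaluating against the constant function $1$ in the series form of~\eqref{dualpairing} yields $\langle 1, g\rangle = \mu^*(0,0) = 1$.

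The core of the argument is to verify $\langle pf, g\rangle = 0$ for every polynomial $p$. Using the integral version of the pairing in~\eqref{dualpairing}, Fubini's theorem, and the Cauchy reproducing formula applied to $(pf)(r\,\cdot\,)$, the conjugations built into the pairing and into the definition of $C[\mu]$ cancel, and the pairing reduces to $\lim_{r \to 1^-} \int_{\mathbb{S}^2} (pf)(r^{2}\eta)\, d\mu(\eta)$. A probability measure with finite Riesz $2$-energy charges no set of $2$-capacity zero, so the set $E_f$ where the $K$-limits of $f$ fail to exist (which has $\mathrm{cap}_2(E_f) = 0$) is $\mu$-null, and hence $f^*(\eta) = 0$ for $\mu$-a.e.\ $\eta \in \mathrm{supp}(\mu) \subset \mathcal{Z}(f^*)$. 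This forces the integrand to tend pointwise $\mu$-a.e.\ to $0$.

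The main obstacle is passing the limit inside the last integral. I would justify it by dominated convergence: since $pf \in H^2(\mathbb{B}^2)$, the admissible maximal function $(pf)^{*}_{K}$ lies in $L^2(d\sigma)$, and strong-type capacitary inequalities of the kind used in \cite{AC89, CV95} transfer this boundedness to $L^1(d\mu)$, supplying an integrable majorant for $|(pf)(r^{2}\eta)|$ uniformly in $r$. Once the exchange is licensed, $g$ annihilates $f\cdot \mathcal{P}$ but does not annihilate $1$, so $1 \notin [f]$ and $f$ fails to be cyclic.
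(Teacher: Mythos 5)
Your proposal is correct and follows essentially the same route as the paper: the paper's proof consists precisely of combining Lemma~\ref{cauchylemma} and Lemma~\ref{energylemma} to place $C[\mu]\in\mathcal{D}_{-2}$ for a finite-energy probability measure $\mu$ on $\mathcal{Z}(f^*)$, and then invoking the Brown--Shields annihilation/Hahn--Banach scheme described at the start of Section~4. You in fact supply more detail than the paper on the annihilation step (the reduction to $\lim_{r\to 1}\int_{\mathbb{S}^2}(pf)(r^2\eta)\,d\mu(\eta)$ and the capacitary maximal-function majorant); the only slight imprecision is that the dominating estimate should be run through the capacitary strong-type inequality for the $\mathcal{D}_2$-norm of $pf$ rather than through $L^2(d\sigma)$-boundedness of the $H^2$ maximal function, but the references you cite contain exactly the needed inequality.
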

\begin{rem}
For $\alpha$-capacities, the counterpart to the energy formula 
\eqref{energyformula} is no longer a strict equality; rather 
(see \cite{PR96} for computations of Fourier coefficients associated with 
the kernels $h_{\alpha}$), 
\[I_{\alpha}[\mu]\asymp \sum_{k=0}^{\infty}\sum_{j=0}^k
(k+1)^{1-\alpha}\left(\begin{array}{c}k\\j \end{array}\right)
|\mu^*(j,k-j)|^2.\]
A comparison with Lemma \ref{cauchylemma} shows that 
$\mathrm{cap}_{\alpha_0}(\mathcal{Z}(f^*))>0$ implies that 
$f$ is not cyclic in $\mathcal{D}_{\alpha}$ for $\alpha\geq 
\alpha_0$.
\end{rem}
\subsection*{Acknowledgment}
The author thanks Brett Wick for pointing out several useful references, and Daniel Seco for helpful remarks on an 
earlier version of this note.

\end{document}